\newtheorem{thm}{Theorem}[section]
\newtheorem{cor}[thm]{Corollary}
\theoremstyle{definition}
\theoremstyle{remark}
\newtheorem{rem}[thm]{Remark}
\begin{document}
\title[Inequalities for generalized normalized $\delta$-Casorati curvatures]{Inequalities for generalized normalized $\delta$-Casorati curvatures of slant submanifolds in quaternionic space forms}

\author[Jaewon Lee, Gabriel-Eduard V\^{\i}lcu]{Jaewon Lee, Gabriel-Eduard V\^{\i}lcu}

\date{}
\maketitle

\abstract  In this paper we prove two sharp inequalities involving the normalized scalar curvature and the generalized normalized $\delta$-Casorati curvatures for slant submanifolds in quaternionic space forms. We also characterize those submanifolds for which the equality cases hold. These results are a
generalization of some recent results concerning the Casorati curvature for a slant submanifold in a quaternionic space
form obtained by Slesar et al.: \emph{J. Inequal. Appl.} 2014, 2014:123.\\ \\
{\bf Key Words:} scalar curvature, mean curvature, $\delta$-Casorati curvature, shape operator,
quaternionic space form, slant submanifold, optimal inequality.
\endabstract

\section{Introduction}

In order to provide answers to an open question raised by S.S. Chern \cite{CSS} concerning the existence of minimal immersions into
Euclidean spaces of arbitrary dimension, Prof. B.-Y. Chen \cite{CH1} introduced in the early 1990's
new types of Riemannian invariants, known in the literature as Chen invariants or $\delta$-invariants and established general optimal inequalities involving the new intrinsic invariants and the main extrinsic invariants for arbitrary Riemannian
submanifolds. Thus was born the theory of Chen invariants, one of the most interesting research topic in differential geometry of submanifolds.

After $\delta$-invariants were invented and first inequalities were proved, such invariants and Chen-like inequalities were considered in different ambient spaces for many classes of submanifolds. For example, new optimal inequalities involving Chen invariants were recently proved in \cite{ABM,SCD,CH12,CDVV,CPW,DNG,GKK1,GKK2,OM,MHS}. We also note that some interesting inequalities for the length of the second fundamental form of the warped product submanifolds were obtained recently in \cite{AK,AK2,AK3,AK4,KUS,UK}.

On the other hand, it is well-known that the Casorati curvature of a submanifold in a Riemannian manifold is an extrinsic invariant defined as the normalized square of the length of the second fundamental form and it was preferred by Casorati over the traditional Gauss curvature because corresponds better with the common intuition of curvature \cite{CAS}. Some optimal inequalities involving Casorati curvatures were proved in \cite{DHV1,DHV,GHI,SSV} for several submanifolds in real, complex and quaternionic space forms.
In this paper, we establish two sharp inequalities involving the normalized scalar curvature and the generalized normalized $\delta$-Casorati curvatures for slant submanifolds in quaternionic space forms and also completely characterize those submanifolds for which the equality cases hold, generalizing some recent results from \cite{SSV}.

\section{Preliminaries}

This section gives several basic definitions and notations for our framework based mainly on \cite{CH01,ISH}.

Let $M^n$ be an $n$-dimensional Riemannian submanifold of an $m$-dimensional  Riemannian manifold $(\overline{M}^m,\overline{g})$. Then we denote by $g$ the metric
tensor induced on $M$. Let $K(\pi)$ be the sectional curvature of $M$ associated with a plane
section $\pi\subset T_pM,\ p\in M$. If $\{e_1,...,e_n\}$ is an
orthonormal basis of the tangent space $T_pM$ and $\{e_{n+1},...,e_{m}\}$ is an orthonormal basis of the normal space $T_p^\perp M$, then the scalar curvature
$\tau$ at $p$ is given by
\[
\tau(p)=\sum_{1\leq i<j\leq n}K(e_i\wedge e_j)
\]
and the normalized scalar curvature $\rho$ of $M$ is defined as
\[
\rho=\frac{2\tau}{n(n-1)}.
\]

If $\overline{\nabla}$ is the Levi-Civita connection on $\overline{M}$ and $\nabla$ is the covariant differentiation
induced on $M$, then the Gauss and Weingarten formulas are given by:
\[
       \overline{\nabla}_XY=\nabla_XY+h(X,Y), \forall X,Y \in
\Gamma(TM)
\]
and
\[
       \overline{\nabla}_XN=-A_NX+\nabla_{X}^{\perp}N, \forall X\in
\Gamma(TM), \forall N\in \Gamma(TM^\perp)
\]
where $h$ is the second fundamental form of $M$, $\nabla^\perp$ is
the connection on the normal bundle and $A_N$ is the shape operator
of $M$ with respect to $N$. If we denote by $\overline{R}$ and $R$ the curvature tensor fields
of $\overline{\nabla}$ and $\nabla$, then we have the Gauss equation:
\begin{eqnarray}\label{5}
\overline{R}(X,Y,Z,W)&=&R(X,Y,Z,W)+\overline{g}(h(X,W),h(Y,Z))\nonumber\\
&&-\overline{g}(h(X,Z),h(Y,W)),
       \end{eqnarray}
for all $X,Y,Z,W\in \Gamma(TM)$.

We denote by $H$ the mean curvature vector, that is
\[
H(p)=\frac{1}{n}\sum_{i=1}^{n}h(e_i,e_i)
\]
and we also set
\[
h_{ij}^\alpha=g(h(e_i,e_j),e_\alpha),\ i,j\in\{1,...,n\},\ \alpha\in\{n+1,...,m\}.
\]
Then it is well-known that the squared mean curvature of the submanifold $M$ in $\overline{M}$ is defined by
\[
\|H\|^2=\frac{1}{n^2}\sum_{\alpha=n+1}^{m}\left(\sum_{i=1}^{n}h_{ii}^\alpha\right)^2
\]
and the squared norm of $h$ over dimension $n$ is denoted by $\mathcal{C}$ and is called the Casorati curvature of the submanifold $M$. Therefore we have
\[
\mathcal{C}=\frac{1}{n}\sum_{\alpha=n+1}^{m}\sum_{i,j=1}^{n}\left(h_{ij}^\alpha\right)^2.
\]

The submanifold $M$ is called \emph{invariantly quasi-umbilical} if there exists $m-n$ mutually orthogonal unit normal vectors $\xi_{n+1},...,\xi_{m}$ such that the shape operators with respect to all directions $\xi_{\alpha}$ have an eigenvalue of multiplicity $n-1$ and that for each $\xi_{\alpha}$ the distinguished eigendirection is the same.

Suppose now that $L$ is an $s$-dimensional subspace of $T_pM$, $s\geq 2$ and
let $\{e_1,...,e_s\}$ be an orthonormal basis of $L$. Then the scalar
curvature $\tau(L)$ of the $s$-plane section $L$ is given by
\[
\tau(L)=\sum_{1\leq\alpha<\beta\leq s}K(e_\alpha\wedge e_\beta)
\]
and the Casorati curvature $\mathcal{C}(L)$ of the subspace $L$ is defined as
\[
\mathcal{C}(L)=\frac{1}{s}\sum_{\alpha=n+1}^{m}\sum_{i,j=1}^{s}\left(h_{ij}^\alpha\right)^2.
\]

The normalized  $\delta$-Casorati curvatures $\delta_c(n-1)$ and $\widehat{\delta}_c(n-1)$ of the submanifold $M^n$ are given by
\[
\left[\delta_c(n-1)\right]_p=\frac{1}{2}\mathcal{C}_p+\frac{n+1}{2n}{\rm inf}\{\mathcal{C}(L)|L\ {\rm a\ hyperplane\ of\ } T_pM\}
\]
and
\[
\left[\widehat{\delta}_c(n-1)\right]_p=2\mathcal{C}_p-\frac{2n-1}{2n}{\rm sup}\{\mathcal{C}(L)|L\ {\rm a\ hyperplane\ of\ } T_pM\}.
\]

The generalized normalized $\delta$-Casorati curvatures $\delta_C(r;n-1)$ and $\widehat{\delta}_C(r;n-1)$ of the submanifold $M^n$ are defined for any positive real number $r\neq n(n-1)$ as
\[
\left[\delta_C(r;n-1)\right]_p=r\mathcal{C}_p+\frac{(n-1)(n+r)(n^2-n-r)}{rn}{\rm inf}\{\mathcal{C}(L)|L\ {\rm a\ hyperplane\ of\ } T_pM\},
\]
if $0<r<n^2-n$, and
\[
\left[\widehat{\delta}_C(r;n-1)\right]_p=r\mathcal{C}_p-\frac{(n-1)(n+r)(r-n^2+n)}{rn}{\rm sup}\{\mathcal{C}(L)|L\ {\rm a\ hyperplane\ of\ } T_pM\},
\]
if $r>n^2-n$.

Assume now that $(\overline{M},\overline{g})$ is a smooth manifold such that
there is a rank 3-subbundle $\sigma$ of $End(T\overline{M})$ with local basis $\lbrace{J_1,J_2,J_3}\rbrace$ satisfying for all $\alpha\in\{1,2,3\}$:
    \[
    \overline{g}(J_{\alpha}\cdot,J_{\alpha}\cdot)=\overline{g}(\cdot,\cdot)
    \]
and
       \[
       J_\alpha^2=-{\rm Id},
       J_{\alpha}J_{\alpha+1}=-J_{\alpha+1}J_{\alpha}=J_{\alpha+2},
       \]
where ${\rm Id}$ denotes the identity tensor field of
type (1, 1) on $\overline{M}$ and the indices are taken from $\{1,2,3\}$ modulo 3.
Then $(\overline{M},\sigma,\overline{g})$ is said to be an almost
quaternionic Hermitian manifold. It is easy to see that such manifold is of dimension $4m$, $m\geq 1$.
Moreover, if the bundle $\sigma$ is parallel with respect to the Levi-Civita
connection $\overline{\nabla}$ of $\overline{g}$, then
$(\overline{M},\sigma,\overline{g})$ is said to be a quaternionic
K\"{a}hler manifold.

Let $(\overline{M},\sigma,\overline{g})$ be a quaternionic
K\"{a}hler manifold and let $X$ be a non-null vector field on
$\overline{M}$. Then the 4-plane spanned by $\{X,J_1X,J_2X,J_3X\}$,
denoted by $Q(X)$, is called a quaternionic 4-plane. Any 2-plane in
$Q(X)$ is called a quaternionic plane. The sectional curvature of a
quaternionic plane is called a quaternionic sectional curvature. A
quaternionic K\"{a}hler manifold is a quaternionic space form if its
quaternionic sectional curvatures are equal to a constant, say $c$.
It is well-known that a quaternionic K\"{a}hler manifold
$(\overline{M},\sigma,\overline{g})$ is a quaternionic space form,
denoted $\overline{M}(c)$, if and only if its curvature tensor is
given by
     \begin{eqnarray}\label{6}
       \overline{R}(X,Y)Z&=&\frac{c}{4}\lbrace\overline{g}(Z,Y)X-
       \overline{g}(X,Z)Y+\sum\limits_{\alpha=1}^3
        [\overline{g}(Z,J_\alpha Y)J_\alpha X-\nonumber\\
       &&-\overline{g}(Z,J_\alpha X)J_\alpha Y+
        2\overline{g}(X,J_\alpha Y)J_\alpha Z]\rbrace
      \end{eqnarray}
for all vector fields $X,Y,Z$ on $\overline{M}$ and any local basis
$\lbrace{J_1,J_2,J_3}\rbrace$ of $\sigma.$

A submanifold $M$ of a quaternionic K\"{a}hler manifold
$(\overline{M},\sigma,\overline{g})$ is said to be a slant submanifold \cite{SH} if for each
non-zero vector $X$ tangent to $M$ at $p$, the angle $\theta(X)$
between $J_\alpha(X)$ and $T_pM$, $\alpha\in\{1,2,3\}$ is constant,
i.e. it does not depend on the choice of $p\in M$ and $X\in T_pM$.
We can easily see that quaternionic submanifolds are slant
submanifolds with $\theta=0$ and totally-real submanifolds are slant
submanifolds with $\theta=\frac{\pi}{2}$. A slant submanifold of a
quaternionic K\"{a}ler manifold is said to be proper (or
$\theta$-slant proper) if it is neither quaternionic nor totally
real. We recall that every proper slant submanifold of a quaternionic K\"{a}hler manifold
is of even dimension $n=2s\geq 2$ and we can
choose a canonical orthonormal local frame, called an adapted slant frame, as follows:
\[
\{e_1,e_2={\rm sec}\theta\ P_\alpha
e_1,...,e_{2s-1},e_{2s}={\rm sec}\theta\ P_\alpha e_{2s-1}\},
\]
where $P_{\alpha}e_{2k-1}$ denotes the tangential component of
$J_{\alpha}e_{2k-1}$, $k\in\{1...,s\}$ and $\alpha$ is 1, 2 or 3 (see \cite{VIL1,VIL3}).

\section{Main results}

\begin{thm}\label{T1}
Let $M^n$ be a $\theta$-slant proper submanifold of a quaternionic
space form $\overline{M}^{4m}(c)$. Then:
\begin{enumerate}
  \item[(i)] The generalized normalized $\delta$-Casorati curvature $\delta_C(r;n-1)$ satisfies
  \begin{equation}\label{1}
\rho\leq\frac{\delta_C(r;n-1)}{n(n-1)}+\frac{c}{4}\left(1+\frac{9}{n-1}\cos^2\theta\right)
\end{equation}
for any real number $r$ such that $0<r<n(n-1)$.
  \item[(ii)] The generalized normalized $\delta$-Casorati curvature $\widehat{\delta}_C(r;n-1)$ satisfies
  \begin{equation}\label{2}
\rho\leq\frac{\widehat{\delta}_C(r;n-1)}{n(n-1)}+\frac{c}{4}\left(1+\frac{9}{n-1}\cos^2\theta\right)
\end{equation}
for any real number $r>n(n-1)$.
\end{enumerate}

Moreover, the equality sign holds in the inequalities (\ref{1}) and (\ref{2}) if and only if $M^n$ is an invariantly quasi-umbilical submanifold with trivial normal connection in  $\overline{M}^{4m}(c)$, such that with respect to suitable orthonormal tangent frame $\{\xi_1,...,\xi_{n}\}$  and normal orthonormal frame  $\{\xi_{n+1},...,\xi_{4m}\}$, the shape operators $A_r\equiv A_{\xi_r}$, $r\in\{n+1,...,4m\}$,
take the following forms:
\begin{equation}\label{3}
A_{n+1}= \left( \begin{array}{cccccc}
a & 0 & 0 & ... & 0 & 0\\
0 & a & 0 & ... & 0 & 0\\
0 & 0 & a & ... & 0 & 0\\
\vdots & \vdots & \vdots & \ddots & \vdots & \vdots\\
0 & 0 & 0 & ... & a & 0\\
0 & 0 & 0 & ... & 0 & \frac{n(n-1)}{r}a \end{array} \right),\ A_{n+2}=...=A_{4m}=0.
\end{equation}
\end{thm}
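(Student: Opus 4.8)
The plan is to combine the Gauss equation with the explicit curvature tensor of a quaternionic space form, and then reduce the inequality to a purely algebraic estimate on the components $h_{ij}^\alpha$ of the second fundamental form, which is optimized via a quadratic-form argument.

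First I would write out the scalar curvature $2\tau$ using the Gauss equation \eqref{5} together with \eqref{6}. Contracting \eqref{6} over an orthonormal frame and using that $M^n$ is $\theta$-slant proper, so that $\sum_{\alpha=1}^3\sum_{i,j=1}^n \overline{g}(e_i,J_\alpha e_j)^2 = 3n\cos^2\theta$ (this is where the adapted slant frame and the constancy of the slant angle enter), one obtains an identity of the shape
\[
2\tau = \frac{c}{4}\bigl(n(n-1)+9\cos^2\theta\bigr) + n^2\|H\|^2 - n\,\mathcal C.
\]
Here $n^2\|H\|^2 - n\mathcal C = \sum_\alpha\bigl[(\sum_i h_{ii}^\alpha)^2 - \sum_{i,j}(h_{ij}^\alpha)^2\bigr]$ is the usual extrinsic term.

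Next I would introduce the quadratic polynomial
\[
\mathcal P = r\,\mathcal C + \frac{(n-1)(n+r)(n^2-n-r)}{rn}\,\mathcal C(L) - 2\tau + \frac{c}{4}\bigl(n(n-1)+9\cos^2\theta\bigr),
\]
where $L$ is the hyperplane of $T_pM$ spanned by $\{e_1,\dots,e_{n-1}\}$, and substitute the expression for $2\tau$ obtained above. After collecting terms this becomes a sum over $\alpha$ of a quadratic form in the variables $\{h_{ij}^\alpha\}$; the cross terms $h_{ij}^\alpha$ with $i\neq j$ and $i,j\le n-1$ contribute positively, the terms $h_{in}^\alpha$ contribute with a positive coefficient, and the remaining part is a quadratic form in the diagonal entries $h_{11}^\alpha,\dots,h_{nn}^\alpha$. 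The crux is to show that this diagonal quadratic form is positive semidefinite for $0<r<n(n-1)$; I would do this by exhibiting it as a combination whose associated symmetric matrix has nonnegative principal minors, or equivalently by completing the square, the key inequality being of the type $\bigl(\tfrac{n}{r}+\tfrac{1}{n-1}\bigr)\sum_{i<j\le n-1}(h_{ii}^\alpha-h_{jj}^\alpha)^2/\text{(something)} + \dots \ge 0$. This yields $\mathcal P\ge 0$, which is exactly \eqref{1}; the argument for \eqref{2} with $r>n(n-1)$ is identical with $\mathcal C(L)$ replaced by its supremum and the obvious sign change. For the equality discussion, one reads off from the completed-square form that $\mathcal P=0$ forces $h_{ij}^\alpha=0$ for $i\neq j$, $h_{nn}^\alpha=\tfrac{n(n-1)}{r}h_{11}^\alpha$, and $h_{11}^\alpha=\dots=h_{n-1\,n-1}^\alpha=:a_\alpha$; a standard argument (choosing $e_{n+1}$ in the direction of $H$) then shows only the $\xi_{n+1}$ shape operator survives, giving precisely \eqref{3}, and the resulting submanifold is invariantly quasi-umbilical with flat normal connection.

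The main obstacle I expect is the positive-semidefiniteness of the diagonal quadratic form: getting the coefficients exactly right after substituting the generalized $\delta$-Casorati normalization factor $\frac{(n-1)(n+r)(n^2-n-r)}{rn}$, and verifying that the critical point of the constrained optimization indeed lies where the proposed equality shape operator \eqref{3} predicts. This is bookkeeping-heavy rather than conceptually hard, but it is where an error would most easily creep in; a clean way to organize it is to first treat the special case $r=\tfrac{1}{2}n(n-1)$ (recovering the $\delta_c(n-1)$ result of Slesar et al.) and then see how the coefficients scale with $r$.
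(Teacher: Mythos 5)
Your proposal follows essentially the same route as the paper: the same Gauss-equation identity $2\tau = n^2\|H\|^2 - n\mathcal{C} + \frac{c}{4}\left[n(n-1)+9n\cos^2\theta\right]$ (note the coefficient is $9n\cos^2\theta$, not $9\cos^2\theta$, consistent with your own count $3n\cos^2\theta$ of the slant sum), the same quadratic polynomial $\mathcal{P}$, and positivity of $\mathcal{P}$ established by an equivalent quadratic-form argument (the paper locates the critical points and checks that the Hessian is positive semidefinite rather than completing the square). The equality analysis also matches the paper's: $h^\alpha_{ij}=0$ for $i\neq j$, $h^\alpha_{11}=\dots=h^\alpha_{n-1,n-1}$, $h^\alpha_{nn}=\frac{n(n-1)}{r}h^\alpha_{11}$, followed by a rotation of the normal frame to obtain (\ref{3}).
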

\begin{proof}
Since $M$ is $\theta$-slant, then it is known from \cite{SH} that
\begin{equation}\label{7}
P_{\beta}P_{\alpha}X=-{\rm cos}^2\theta X,\ \ \forall X\in\Gamma(TM),\
\alpha,\beta\in\{1,2,3\},
\end{equation}
where $P_{\alpha}X$ denotes the tangential component of
$J_{\alpha}X$.

From (\ref{7}) it follows immediately that
\begin{equation}\label{8}
g(P_\alpha X,P_\beta Y)={\rm cos}^2\theta g(X,Y).
\end{equation}
for $X,Y\in \Gamma(TM)$ and
$\alpha,\beta\in\{1,2,3\}$.

On the other hand, because $\overline{M}^{4m}(c)$ is a quaternionic space form, from
(\ref{5}) and (\ref{6}) we derive
\begin{equation}\label{9}
n^2\|H\|^2=2\tau(p)+\|h\|^2-\frac{n(n-1)c}{4}-\frac{3c}{4}\sum_{\beta=1}^3\sum_{i,j=1}^{n}g^2(P_\beta
e_i,e_j).
\end{equation}

Choosing now an
adapted slant basis$\{e_1,e_2={\rm sec}\theta\ P_\alpha
e_1,...,e_{2s-1},e_{2s}={\rm sec}\theta\ P_\alpha e_{2s-1}\}$
of $T_pM$, $p\in M$,
where $2s=n$, from (\ref{7}) and (\ref{8}), we derive
\begin{equation}\label{10}
g^2(P_\beta e_i,e_{i+1})=g^2(P_\beta e_{i+1},e_i)=\cos^2\theta,\ {\rm
for}\ i=1,3,...,2s-1
\end{equation}
and
\begin{equation}\label{11}
g(P_\beta e_i,e_j)=0,\ {\rm for}\
(i,j)\not\in\{(2l-1,2l),(2l,2l-1)|l\in\{1,2,....,s\}\}.
\end{equation}

By using (\ref{10}) and (\ref{11}) in (\ref{9})  we get
\begin{equation}\label{12}
2\tau(p)=n^2\|H\|^2-n\mathcal{C}+\frac{c}{4}[n(n-1)+9n\cos^2\theta].
\end{equation}

We consider now the following quadratic polynomial in the components of the second fundamental form:
\[
\mathcal{P}=r\mathcal{C}+\frac{(n-1)(n+r)(n^2-n-r)}{rn}\mathcal{C}(L)-2\tau(p)+\frac{c}{4}\left[n(n-1)+9n\cos^2\theta\right],
\]
where $L$ is a hyperplane of $T_pM$.  Without loss of generality we can assume that $L$ is spanned by $e_1,...,e_{n-1}$. Then we derive
\begin{eqnarray}\label{13}
\mathcal{P}&=&\frac{r}{n}\sum_{\alpha=n+1}^{4m}\sum_{i,j=1}^{n}\left(h_{ij}^\alpha\right)^2
+\frac{(n+r)(n^2-n-r)}{rn}\sum_{\alpha=n+1}^{4m}\sum_{i,j=1}^{n-1}\left(h_{ij}^\alpha\right)^2\nonumber\\
&&-2\tau(p)+\frac{c}{4}\left[n(n-1)+9n\cos^2\theta\right].
\end{eqnarray}

From (\ref{12}) and (\ref{13}), we obtain
\begin{eqnarray}
\mathcal{P}&=&\frac{n+r}{n}\sum_{\alpha=n+1}^{4m}\sum_{i,j=1}^{n}\left(h_{ij}^\alpha\right)^2\nonumber\\
&&+\frac{(n+r)(n^2-n-r)}{rn}\sum_{\alpha=n+1}^{4m}\sum_{i,j=1}^{n-1}\left(h_{ij}^\alpha\right)^2
-\sum_{\alpha=n+1}^{4m}\left(\sum_{i=1}^{n}h_{ii}^\alpha\right)^2\nonumber
\end{eqnarray}
which is equivalent to
\begin{eqnarray}\label{14}
\mathcal{P}&=&\sum_{\alpha=n+1}^{4m}\sum_{i=1}^{n-1}\left[\frac{n^2+n(r-1)-2r}{r}\left(h_{ii}^\alpha\right)^2+\frac{2(n+r)}{n}\left(h_{in}^\alpha\right)^2\right]\nonumber\\
&&+\sum_{\alpha=n+1}^{4m}\left[\frac{2(n+r)(n-1)}{r}\sum_{i<j=1}^{n-1}\left(h_{ij}^\alpha\right)^2-2\sum_{i<j=1}^{n}h_{ii}^\alpha h_{jj}^\alpha+\frac{r}{n}\left(h_{nn}^{\alpha}\right)^2\right].
\end{eqnarray}

From (\ref{14}) it follows that critical points
\[h^c=\left(h_{11}^{n+1},h_{12}^{n+1},...,h_{nn}^{n+1},...,h_{11}^{4m},h_{12}^{4m},...,h_{nn}^{4m}\right)\]
of $\mathcal{P}$ are the solutions of the following system of linear homogeneous equations:
\begin{equation}\label{15}
\begin{cases} \frac{\partial\mathcal{P}}{\partial h_{ii}^\alpha}= \frac{2(n+r)(n-1)}{r}h_{ii}^\alpha-2\displaystyle\sum_{k=1}^{n}h_{kk}^\alpha=0\\
\frac{\partial\mathcal{P}}{\partial h_{nn}^\alpha}=\frac{2r}{n}h_{nn}^\alpha-2\displaystyle\sum_{k=1}^{n-1}h_{kk}^\alpha=0 \\
\frac{\partial\mathcal{P}}{\partial h_{ij}^\alpha}=\frac{4(n+r)(n-1)}{r}h_{ij}^\alpha=0 \\
\frac{\partial\mathcal{P}}{\partial h_{in}^\alpha}=\frac{4(n+r)}{n}h_{in}^\alpha=0 \end{cases}
\end{equation}
with $i,j\in\{1,...,n-1\}$, $i\neq j$, and  $\alpha\in\{n+1,...,4m\}$.

From (\ref{15}) it follows that every solutions $h^c$ has $h_{ij}^{\alpha}=0$ for $i\neq j$ and the determinant which corresponds to the first two sets of equations of the above system is zero (there exist solutions for non-totally geodesic submanifolds). Moreover it is easy to see that the Hessian matrix of $\mathcal{P}$ has the form
\begin{equation}
\mathcal{H}(\mathcal{P})= \left( \begin{array}{ccc}
 H_1 & \mathbf{0} & \mathbf{0}\\
 \mathbf{0} & H_2 & \mathbf{0}\\
 \mathbf{0} & \mathbf{0} & H_3 \end{array} \right),\nonumber
\end{equation}
where
\begin{equation}
H_1= \left( \begin{array}{ccccc}
\frac{2(n+r)(n-1)}{r}-2 & -2  & ... & -2 & -2 \\
-2 & \frac{2(n+r)(n-1)}{r}-2 &  ... & -2 & -2\\
\vdots & \vdots  & \ddots & \vdots & \vdots\\
-2 & -2 &  ... & \frac{2(n+r)(n-1)}{r}-2 & -2\\
-2 & -2  & ... & -2 &  \frac{2r}{n}\\\end{array} \right),\nonumber
\end{equation}
$\mathbf{0}$ denotes the null matrix of corresponding dimensions and $H_2,H_3$ are the next diagonal matrices
\[
H_2={\rm diag}\left(\frac{4(n+r)(n-1)}{r},\frac{4(n+r)(n-1)}{r},\ldots,\frac{4(n+r)(n-1)}{r}\right),
\]
\[
H_3={\rm diag}\left(\frac{4(n+r)}{n},\frac{4(n+r)}{n},\ldots,\frac{4(n+r)}{n}\right).
\]

Therefore we find that $\mathcal{H}(\mathcal{P})$ has the following eigenvalues:
\[
\lambda_{11}=0,\ \lambda_{22}=\frac{2(n^3-n^2+r^2)}{rn},\ \lambda_{33}=...=\lambda_{nn}=\frac{2(n+r)(n-1)}{r},
\]
\[
\lambda_{ij}=\frac{4(n+r)(n-1)}{r},\ \lambda_{in}=\frac{4(n+r)}{n}, \forall i,j\in\{1,...,n-1\},\ i\neq j.
\]

Hence we deduce that $\mathcal{P}$ is parabolic and reaches a minimum $\mathcal{P}(h^c)$ for each solution $h^c$ of the system (\ref{15}). Inserting now (\ref{15}) in (\ref{14}) we get that  $\mathcal{P}(h^c)=0$. So $\mathcal{P}\geq 0$, and this implies
\[
2\tau(p)\leq r\mathcal{C}+\frac{(n-1)(n+r)(n^2-n-r)}{rn}\mathcal{C}(L)+\frac{c}{4}\left[n(n-1)+9n\cos^2\theta\right]
\]

Therefore we derive
\begin{equation}\label{AAA}
\rho\leq\frac{r}{n(n-1)}\mathcal{C}+\frac{(n+r)(n^2-n-r)}{rn^2}\mathcal{C}(L)+\frac{c}{4}\left(1+\frac{9}{n-1}\cos^2\theta\right)
\end{equation}
for every tangent hyperplane $L$ of $M$ and both inequalities (\ref{1}) and (\ref{2}) obviously follow from (\ref{AAA}).

Moreover, we can easily see now that the equality sign holds in the inequalities (\ref{1}) and (\ref{2}) if and only if
\begin{equation}\label{16}
h_{ij}^\alpha=0,\ \forall\ i,j\in\{1,...,n\},\ i\neq j\,
\end{equation}
and
\begin{equation}\label{17}
h_{nn}^\alpha=\frac{n(n-1)}{r}h_{11}^\alpha=\frac{n(n-1)}{r}h_{22}^\alpha=...=\frac{n(n-1)}{r}h_{n-1,n-1}^\alpha
\end{equation}
for all $\alpha\in\{n+1,...,4m\}$.

Finally, from (\ref{16}) and (\ref{17}) we deduce that the equality sign holds in (\ref{1}) and (\ref{2}) if and only if the submanifold $M$ is invariantly quasi-umbilical with trivial normal connection in  $\overline{M}$, such that the shape operators take the forms (\ref{3}) with respect to suitable tangent and normal orthonormal frames.
\end{proof}

\begin{cor}
Let $M^n$ be a $\theta$-slant proper submanifold of a quaternionic
space form $\overline{M}^{4m}(c)$. Then:
\begin{enumerate}
  \item[(i)] The normalized $\delta$-Casorati curvature $\delta_c(n-1)$ satisfies
  \begin{equation}\label{b1}
\rho\leq\delta_c(n-1)+\frac{c}{4}\left(1+\frac{9}{n-1}\cos^2\theta\right).
\end{equation}

Moreover, the equality sign holds if and only if $M^n$ is an invariantly quasi-umbilical submanifold with trivial normal connection in  $\overline{M}^{4m}(c)$, such that with respect to suitable orthonormal tangent frame $\{\xi_1,...,\xi_{n}\}$  and normal orthonormal frame  $\{\xi_{n+1},...,\xi_{4m}\}$, the shape operators $A_r\equiv A_{\xi_r}$, $r\in\{n+1,...,4m\}$,
take the following forms:
\begin{equation}\label{b2}
A_{n+1}= \left( \begin{array}{cccccc}
a & 0 & 0 & ... & 0 & 0\\
0 & a & 0 & ... & 0 & 0\\
0 & 0 & a & ... & 0 & 0\\
\vdots & \vdots & \vdots & \ddots & \vdots\\
0 & 0 & 0 & ... & a & 0\\
0 & 0 & 0 & ... & 0 & 2a \end{array} \right),\ A_{n+2}=...=A_{4m}=0.
\end{equation}

  \item[(ii)] The normalized $\delta$-Casorati curvature $\widehat{\delta}_c(n-1)$ satisfies
  \begin{equation}\label{b3}
\rho\leq\widehat{\delta}_c(n-1)+\frac{c}{4}\left(1+\frac{9}{n-1}\cos^2\theta\right).
\end{equation}

Moreover, the equality sign holds if and only if $M^n$ is an invariantly quasi-umbilical submanifold with trivial normal connection in  $\overline{M}^{4m}(c)$, such that with respect to suitable orthonormal tangent frame $\{\xi_1,...,\xi_{n}\}$  and normal orthonormal frame  $\{\xi_{n+1},...,\xi_{4m}\}$, the shape operators $A_r\equiv A_{\xi_r}$, $r\in\{n+1,...,4m\}$,
take the following forms:
\begin{equation}\label{b4}
A_{n+1}= \left( \begin{array}{cccccc}
2a & 0 & 0 & ... & 0 & 0\\
0 & 2a & 0 & ... & 0 & 0\\
0 & 0 & 2a & ... & 0 & 0\\
\vdots & \vdots & \vdots & \ddots & \vdots\\
0 & 0 & 0 & ... & 2a & 0\\
0 & 0 & 0 & ... & 0 & a \end{array} \right),\ A_{n+2}=...=A_{4m}=0.
\end{equation}
\end{enumerate}
\end{cor}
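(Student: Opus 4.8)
The plan is to obtain both inequalities as immediate specializations of Theorem \ref{T1} by choosing particular admissible values of the parameter $r$, and then to verify that in each case the limiting shape operator \eqref{3} collapses to the stated forms \eqref{b2} and \eqref{b4}. Recall from the definitions in Section 2 that
\[
\left[\delta_c(n-1)\right]_p=\tfrac{1}{2}\mathcal{C}_p+\tfrac{n+1}{2n}\inf\{\mathcal{C}(L)\},\qquad
\left[\delta_C(r;n-1)\right]_p=r\mathcal{C}_p+\tfrac{(n-1)(n+r)(n^2-n-r)}{rn}\inf\{\mathcal{C}(L)\}.
\]
So the first step is the algebraic observation that setting $r=\tfrac{n(n-1)}{2}$ in $\delta_C(r;n-1)$ yields exactly $n(n-1)\,\delta_c(n-1)$: indeed $r=\tfrac{n(n-1)}{2}$ gives coefficient $r=\tfrac{n(n-1)}{2}=n(n-1)\cdot\tfrac12$ in front of $\mathcal{C}_p$, and a short computation shows $\tfrac{(n-1)(n+r)(n^2-n-r)}{rn}=\tfrac{(n-1)(n+r)}{rn}\cdot\tfrac{n(n-1)}{2}=n(n-1)\cdot\tfrac{n+1}{2n}$ after substituting $n^2-n-r=\tfrac{n(n-1)}{2}$ and $\tfrac{n+r}{r}=\tfrac{n+1}{(n-1)/\,2\cdot\ \ldots}$ — in fact $n+r=n+\tfrac{n(n-1)}{2}=\tfrac{n(n+1)}{2}$, so $\tfrac{(n-1)(n+r)(n^2-n-r)}{rn}=\tfrac{(n-1)\cdot\frac{n(n+1)}{2}\cdot\frac{n(n-1)}{2}}{\frac{n(n-1)}{2}\cdot n}=\tfrac{(n-1)(n+1)}{2}=n(n-1)\cdot\tfrac{n+1}{2n}$. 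Since $0<\tfrac{n(n-1)}{2}<n(n-1)$ for $n\geq 2$, part (i) of Theorem \ref{T1} applies and, dividing \eqref{1} through the relation $\delta_C\!\left(\tfrac{n(n-1)}{2};n-1\right)=n(n-1)\,\delta_c(n-1)$, gives \eqref{b1} at once.

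For part (ii), the analogous step is to take $r=2n(n-1)$, which satisfies $r>n(n-1)$, and to check that
\[
\left[\widehat{\delta}_C(2n(n-1);n-1)\right]_p=n(n-1)\left[\widehat{\delta}_c(n-1)\right]_p.
\]
Here $\widehat{\delta}_C(r;n-1)=r\mathcal{C}_p-\tfrac{(n-1)(n+r)(r-n^2+n)}{rn}\sup\{\mathcal{C}(L)\}$ and $\widehat{\delta}_c(n-1)=2\mathcal{C}_p-\tfrac{2n-1}{2n}\sup\{\mathcal{C}(L)\}$. With $r=2n(n-1)$ one has $r=n(n-1)\cdot 2$, $n+r=n+2n(n-1)=n(2n-1)$, and $r-n^2+n=2n(n-1)-n(n-1)=n(n-1)$, hence the supremum coefficient equals $\tfrac{(n-1)\cdot n(2n-1)\cdot n(n-1)}{2n(n-1)\cdot n}=\tfrac{(n-1)(2n-1)}{2}=n(n-1)\cdot\tfrac{2n-1}{2n}$; dividing \eqref{2} by $n(n-1)$ then produces \eqref{b3}.

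It remains to record the shape-operator descriptions. By the equality characterization in Theorem \ref{T1}, equality in \eqref{b1} holds exactly when equality holds in \eqref{1} for the chosen $r=\tfrac{n(n-1)}{2}$, so the submanifold is invariantly quasi-umbilical with trivial normal connection and $A_{n+1}$ has the form \eqref{3} with $\tfrac{n(n-1)}{r}=\tfrac{n(n-1)}{n(n-1)/2}=2$, i.e. the last diagonal entry is $2a$ — which is precisely \eqref{b2}; all other shape operators vanish. Likewise, equality in \eqref{b3} corresponds to equality in \eqref{2} for $r=2n(n-1)$, where $\tfrac{n(n-1)}{r}=\tfrac12$, so the diagonal of $A_{n+1}$ reads $(a,\ldots,a,\tfrac12 a)$; rescaling the scalar by writing $a=2a'$ (harmless, as $a$ is an arbitrary real parameter) puts this in the form \eqref{b4} with entries $(2a',\ldots,2a',a')$ and $A_{n+2}=\cdots=A_{4m}=0$. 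I do not anticipate any real obstacle here; the only thing that requires care is the arithmetic verifying that the two special values of $r$ convert the generalized $\delta$-Casorati curvatures into $n(n-1)$ times the classical ones, and that the normalizing factor $n(n-1)$ in front of $\delta_C$ in \eqref{1}--\eqref{2} cancels correctly against it, together with the bookkeeping of the scalar $a$ in the equality case for \eqref{b4}.
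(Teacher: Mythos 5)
Your proposal is correct and follows essentially the same route as the paper: specialize Theorem \ref{T1} at $r=\tfrac{n(n-1)}{2}$ and $r=2n(n-1)$, verify that these choices turn the generalized normalized $\delta$-Casorati curvatures into $n(n-1)$ times the classical ones, and read off the equality case from \eqref{3}. The arithmetic checks out (despite a garbled intermediate fraction that you immediately redo correctly), and your extra remark about rescaling $a$ in the equality case for \eqref{b4} is a detail the paper leaves implicit.
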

\begin{proof}
(i) It is easy to see that the following relation holds
\begin{equation}\label{50}
\left[\delta_C\left(\frac{n(n-1)}{2};n-1\right)\right]_p=n(n-1)\left[\delta_C(n-1)\right]_p
\end{equation}
in any point $p\in M$. Therefore, taking $r=\frac{n(n-1)}{2}$ in (\ref{1}) and making use of (\ref{50}) we obtain the conclusion.\\
(ii) The following relation can be easily verified:
\begin{equation}\label{51}
\left[\widehat{\delta}_C\left(2n(n-1);n-1\right)\right]_p=n(n-1)\left[\widehat{\delta}_c(n-1)\right]_p,\ \forall p\in M.
\end{equation}

Replacing now $r=2n(n-1)$ in (\ref{2}) and taking account of (\ref{51}) we derive the conclusion.
\end{proof}

\begin{rem}
We note that the above Corollary was recently proved in \cite{SSV}, but with
a slightly modified coefficient in the definition of $\delta_C(n-1)$; in fact, in \cite{SSV} it was used the coefficient $\frac{n+1}{2n(n-1)}$, as in \cite{DHV1,GHI}, instead of $\frac{n+1}{2n}$, like in the present paper. However, because the normalized $\delta$-Casorati curvature $\delta_C(n-1)$ should be able to be recovered from the generalized  normalized $\delta$-Casorati curvature $\delta_C(r;n-1)$ for a positive real number $r\neq n(n-1)$, it would be more appropriate to define $\delta_C(n-1)$ using the coefficient $\frac{n+1}{2n}$ and therefore the proof of Theorem 1.1 (i) in \cite{SSV}  should be adapted to the amended coefficient.  But we would like to point out that this can be done easily replacing by
$\frac{(n+1)(n-1)}{2}$ the coefficient $\frac{n+1}{2}$ of $\mathcal{C}(L)$ in the
definition of a quadratic polynomial $\mathcal{P}$ between (12) and (13) from \cite{SSV} and modifying the corresponding coefficients in (13), (14) and (15). Hence the eigenvalues of the new Hessian matrix of $\mathcal{P}$  become
\[
\lambda_{11}=0,\ \lambda_{22}=n+3,\ \lambda_{33}=...=\lambda_{nn}=2(n+1),
\]
\[
\lambda_{ij}=4(n+1),\ \lambda_{in}=2(n+1), \forall i,j\in\{1,...,n-1\},\ i\neq j.
\]
and the rest of the proof in \cite{SSV} remains unchanged.
\end{rem}

\section*{Acknowledgement} The second author was supported by
CNCS-UEFISCDI, project number PN-II-ID-PCE-2011-3-0118.

Jaewon LEE\\
Yeungnam University,\\
School of General Education,\\
Gyeongsan, 712-749, South Korea\\
E-mail: \emph{leejaew@yu.ac.kr}\\

Gabriel Eduard V\^{I}LCU$^{1,2}$ \\
      $^1$Petroleum-Gas University of Ploie\c sti\\
         Department of Mathematical Modelling, Economic Analysis and Statistics\\
         Bulevardul Bucure\c sti, Nr. 39, Ploie\c sti 100680, Romania\\
         e-mail: \emph{gvilcu@upg-ploiesti.ro}\\
      $^2$University of Bucharest\\
          Faculty of Mathematics and Computer Science\\
         Research Center in Geometry, Topology and Algebra\\
         Str. Academiei, Nr. 14, Sector 1, Bucure\c sti 70109, Romania\\
         e-mail: \emph{gvilcu@gta.math.unibuc.ro}

\end{document}